\newtheorem{theorem}{Theorem}
\newtheorem{remark}{Remark}
\newtheorem{lemma}{Lemma}
\newcommand{\diag}{\mathop{\rm diag}\nolimits}
\newcommand{\rr}{{\mathbb R}}
\newcommand{\ba}[1]{\begin{array}{#1}}
\newcommand{\ea}{\end{array}}
\newcommand{\mr}[1]{\mathrm{#1}}
\newcommand*{\pdot}{\mathbin{\scalerel*{\boldsymbol\odot}{\circ}}}
\begin{document}
\title{\LARGE \bf
A Time-certified Predictor-corrector IPM Algorithm for Box-QP}
\author{Liang Wu$^{1,2}$, Yunhong Che$^{2}$, Richard D. Braatz$^{2}$, Jan Drgona$^{1}$%
\thanks{$^1$Johns Hopkins University, MD 21218, USA. $^2$Massachusetts Institute of Technology, MA 02139, USA. Corresponding author: 
Liang Wu with \tt\small liangwu@mit.edu.
}
}

\maketitle

\begin{abstract}
Minimizing both the \textit{worst-case} and \textit{average} execution times of optimization algorithms is equally critical in real-time optimization-based control applications such as model predictive control (MPC). Most MPC solvers have to trade off between certified \textit{worst-case} and practical \textit{average} execution times. For example, our previous work \cite{wu2025direct} proposed a full-Newton path-following interior-point method (IPM) with data-independent, simple-calculated, and \textit{exact} $O(\sqrt{n})$ iteration complexity, but not as efficient as the heuristic Mehrotra’s predictor–corrector IPM algorithm (which sacrifices global convergence). This letter proposes a new predictor–corrector IPM algorithm that preserves the same certified $O(\sqrt{n})$ iteration complexity while achieving a $5\times$ speedup over \cite{wu2025direct}. Numerical experiments and codes that validate these results are provided.
\end{abstract}

\begin{IEEEkeywords}
Box-constrained quadratic program, iteration complexity, interior-point method, model predictive control.
\end{IEEEkeywords}

\section{Introduction}
This paper considers a scaled box-constrained quadratic program (Box-QP) with time-varying data $(H(t),h(t))$ as follows,
\begin{equation}\label{eqn_Box_QP}
    \begin{aligned}
        \min_{z\in\rr^n}&~ \frac{1}{2} z^\top H(t) z + z^\top h(t)\\
        \text{s.t.}&~  -\mathbf{1}_n\leq z\leq  \mathbf{1}_n,
    \end{aligned}
\end{equation}
where $H(t)\in\rr^{n\times n}$ is symmetric positive semi-definite. Without loss of generality, we assume that the box constraints are scaled to $[ -\mathbf{1}_n, \mathbf{1}_n]$. 

Time-varying Box-QP \eqref{eqn_Box_QP} often arises from real-time model predictive control (MPC) problems. For example, input-constrained MPC \cite{wu2025direct}, $\ell_1$-penalty soft-constrained MPC \cite{wu2025n3}, and model-penalized MPC \cite{saraf2017fast} can be formulated as a Box-QP \eqref{eqn_Box_QP}. While a shorter average computation time for MPC is desirable, the most critical factor is determining the worst-case computation time. Because the MPC solver has to return the optimal solution before the next feedback sampling time, referred to as the \textbf{execution time certificate}.  Recently, the \textbf{execution time certificate} of MPC (reduced to certifying the worst-case number of iterations if each iteration requires the same fixed number of floating-point operations) has attracted significant scholarly interest and remains a vibrant research area \cite{ richter2011computational, cimini2017exact, arnstrom2021unifying, okawa2021linear, kawaguchi2023bounded, wu2025direct, wu2025n3, wu2025eiqp}. A breakthrough is made by our previous work \cite{wu2025direct}, which proposed \textit{data-independent, simple-calculated}, and \textit{exact (not worst-case)} iteration complexity: $\left\lceil\frac{\log(\frac{2n}{\epsilon})}{-2\log(\frac{\sqrt{2n}}{\sqrt{2n}+\sqrt{2}-1})}\right\rceil + 1$ ($O(\sqrt{n})$-order) for Box-QP \eqref{eqn_Box_QP}. Then, it was applied to certifying the execution time of nonlinear MPC via the real-time-iteration scheme \cite{wu2024execution} and Koopman operator \cite{wu2024time}, respectively. Furthermore, \cite{wu2025eiqp} proposed exact iteration complexity: $\left\lceil\frac{\log(\frac{n+1}{\epsilon})}{-\log(1-\frac{0.414213}{\sqrt{n+1}})}\right\rceil$ for general convex QP. The algorithms in \cite{wu2025direct,wu2025eiqp} solve a linear system of equations (with $O(n^3)$) at each iteration, thus resulting $O(n^{3.5})$ time complexity. In \cite{wu2025n3}, by replacing the solving of linear equations with multiple rank-1 updates, for the first time proposed an \textit{implementable} $O(n^3)$-time-complexity QP algorithm (with $\mathcal{N}_\mathrm{iter}= \!\left\lceil\frac{\log\!\left(\frac{2n+\alpha\sqrt{2n}}{\epsilon}\right)}{-\log\!\left(1-\frac{\beta}{\sqrt{2n}}\right)}\right\rceil$ and $\mathcal{N}_{\mathrm{rank}-1}\leq \! \left\lceil\frac{4\eta(\mathcal{N}_\mathrm{iter}-1)\sqrt{n}}{\left(1-\eta\right)\log(1+\delta)}\right\rceil$). 

However, those time-certified interior-point-method (IPM) algorithms proposed in \cite{wu2025direct,wu2025eiqp,wu2025n3} are still not practically competitive: their certified computation times are typically longer than those of state-of-the-art QP solvers, which, in contrast, do not provide an execution time certificate. For example, the \textbf{heuristic} Mehrotra's predictor-corrector IPM algorithm \cite{mehrotra1992implementation}, renowned for its computational efficiency (empirically exhibiting $O(\log n)$ iteration complexity), has become the foundation of most IPM-based optimization software, yet its global convergence and theoretical iteration complexity bound remain unknown, and it even may diverge in some examples \cite[see p.\ 411]{nocedal2006numerical}, \cite{cartis2009some}. This naturally raises the question:
\begin{center}
\textit{Can we design a practically efficient predictor–corrector IPM-based Box-QP algorithm that also achieves the best-known certified iteration complexity of $O(\sqrt{n})$?}
\end{center}

\subsection{Contributions}
This paper gives a positive answer by proposing a predictor-corrector IPM algorithm with a certified, data-independent, and easily computable worst-case iteration bound, $N_{\max}=\left\lceil \frac{\log(\frac{2n}{\epsilon})}{-2\log\left(1-\frac{0.2348}{\sqrt{2n}}\right)}\right\rceil$ ($O(\sqrt{n})$-order), while empirically exhibiting much faster iteration complexity (such as $O(\log n)$ or $O(n^{0.25})$).

Although the proof framework is an extension of the predictor-corrector IPM framework from linear programs \cite{mizuno1993adaptive} to Box-QP \eqref{eqn_Box_QP}, the proof is more complicated than the original linear programming case, as the direction vectors are not orthogonal in the Box-QP case. More importantly, our proposed predictor–corrector IPM algorithm is \textbf{implementable} (code available at \url{https://github.com/liangwu2019/PC-BoxQP}), since a Box-QP admits cost-free initialization \cite{wu2025direct}, whereas the algorithm in \cite{mizuno1993adaptive} is not, as it assumes the availability of a strictly feasible initial point (usually requires solving another LP, known as the \textit{phase I} stage \cite[Sec. 11.4]{boyd2004convex}); consequently, no numerical experiments are reported in \cite{mizuno1993adaptive}.

Compared with our previous time-certified Box-QP algorithm \cite{wu2025direct}, the proposed algorithm demonstrates a $5\times$ speedup.

\section{Feasible Predictor-Corrector IPM Algorithm}
According to \cite[Ch 5]{boyd2004convex}, the Karush–Kuhn–Tucker (KKT) condition of Box-QP \eqref{eqn_Box_QP} is the following nonlinear equations,
\begin{subequations}\label{eqn_KKT}
\begin{align}
    H(t)z + h(t) + \gamma - \theta = 0,\label{eqn_KKT_a}\\
    z + \phi - \mathbf{1}_n=0,\label{eqn_KKT_b}\\
    z - \psi + \mathbf{1}_n=0,\label{eqn_KKT_c}\\
    (\gamma,\theta,\phi,\psi)\geq0,\label{eqn_KKT_d}\\
    \gamma \pdot \phi = 0,\label{eqn_KKT_e}\\
    \theta \pdot \psi = 0,\label{eqn_KKT_f}
\end{align}
\end{subequations}
where $\gamma,\theta$ are the Lagrangian variables of the lower and upper bound, respectively, and $\phi,\psi$ are the slack variables of the lower and upper bound, respectively. $\pdot$ represents the Hadamard product, i.e., $\gamma\pdot \phi = \mathrm{col}(\gamma_1\phi_1,\gamma_2\phi_2,\cdots{},\gamma_n\phi_n)$.

Path-following primal–dual IPMs are categorized into two types: \textit{feasible} and \textit{infeasible}, distinguished by whether the initial point satisfies Eqns. \eqref{eqn_KKT_a}–\eqref{eqn_KKT_d}. For the complementarity constraints \eqref{eqn_KKT_e}–\eqref{eqn_KKT_f}, \textit{feasible} path-following IPMs require the initial point to lie in a narrow neighborhood. To demonstrate this, let us denote the feasible region by $\mathcal{F}$, i.e.,
\begin{equation}
     \mathcal{F}=\{(z,\gamma,\theta,\phi,\psi):\eqref{eqn_KKT_a}\mathrm{-}\eqref{eqn_KKT_c},(\gamma,\theta,\phi,\psi)\geq0\}
\end{equation}
and the set of strictly feasible points by
\begin{equation}
    \mathcal{F}^+\triangleq\{(z,\gamma,\theta,\phi,\psi):\eqref{eqn_KKT_a}\mathrm{-}\eqref{eqn_KKT_c},(\gamma,\theta,\phi,\psi)>0\}
\end{equation}
We also consider the neighborhood
\begin{equation}
    \mathcal{N}(\beta)\triangleq\left\{(z,\gamma,\theta,\phi,\psi)\in\mathcal{F}^+: \left\|\left[\begin{array}{c}
         \gamma\pdot \phi  \\
         \theta\pdot \psi 
    \end{array}\right]-\mu \mathbf{1}_{2n} \right\|_2\leq\beta\mu\right\}
\end{equation}
where the duality measure $\mu\triangleq\frac{\gamma^\top \phi+\theta^\top\psi}{2n}$ and $\beta\in[0,1]$. \textit{Feasible} path-following IPMs require the initial point:
\begin{equation}\label{eqn_initial_point_requirement}
    (z^0,\gamma^0,\theta^0,\phi^0,\psi^0)\in\mathcal{N}(\beta),
\end{equation}
and computing such a point is typically expensive for general strictly convex QPs.

 \subsection{Cost-free initialization for Feasible IPMs}
Inspired by our previous work \cite{wu2025direct}, which was the first to point out that Box-QP supports cost-free initialization for feasible IPMs, this letter proposes the following initialization to ensure $ (z^0,\gamma^0,\theta^0,\phi^0,\psi^0)\in\mathcal{N}(\beta)$,
\begin{remark}\label{remark_initialization}
For $h=0$, the optimal solution of Box-QP \eqref{eqn_Box_QP} is $z^*=0$. For $h\neq0$,  first scale the objective as 
\[
\min_z \tfrac{1}{2} z^\top (2\lambda H) z + z^\top (2\lambda h)
\]
which does not affect the optimal solution 
and can ensure the initial point lies in $\mathcal{N}(\beta)$ if $\lambda\leftarrow\frac{\beta}{\sqrt{2}\|h\|_2}$. Then \eqref{eqn_KKT_a} is replaced by 
\[
2\lambda H z+2\lambda h+\gamma-\theta=0
\]
the initialization strategy for Box-QP \eqref{eqn_Box_QP} 
\begin{equation}\label{eqn_initialization_stragegy}
z^0 = 0,~\gamma^0 =\mathbf{1}_n - \lambda h,~\theta^0 =\mathbf{1}_n + \lambda h,~\phi^0 = \mathbf{1}_n, ~\psi^0 = \mathbf{1}_n,   
\end{equation}
which clearly places this initial point in $\mathcal{N}(\beta)$ by its definition in Eqn. \eqref{eqn_initial_point_requirement} (for example, $\left\|\left[\begin{array}{c}
         \gamma^0\pdot \phi^0  \\
         \theta^0\pdot \psi^0 
    \end{array}\right]-\mu \mathbf{1}_{2n} \right\|_2=\beta\mu$, where $\mu=1$). In particular, this letter chooses $\beta=\frac{1}{4}$, then $\lambda=\frac{1}{4\sqrt{2}\|h\|_2}$.
\end{remark}

\subsection{Algorithm descriptions}
For simplicity, we introduce
\[
v\triangleq\mathrm{col}(\gamma,\theta)\in\rr^{2n},~ s\triangleq\mathrm{col}(\phi,\psi)\in\rr^{2n}.
\]
According to Remark \ref{remark_initialization}, we have $(z,v,s)\in\mathcal{N}(\beta)$. Then, all the search directions
$(\Delta z,\Delta v,\Delta s)$
(for both predictor and corrector steps) are obtained as solutions of the following system of linear equations:
\begin{subequations}\label{eqn_Newton}
    \begin{align}
    (2\lambda H)\Delta z+ \Omega \Delta v &=0\label{eqn_Newton_a}\\
    \Omega^\top \Delta z + \Delta s & =0\label{eqn_Newton_b}\\
    s\pdot \Delta v + v\pdot \Delta s &=  \sigma \mu \mathbf{1}_{2n} - v\pdot s\label{eqn_Newton_c}
    \end{align}
\end{subequations}
where $\Omega=[I,-I] \in\mathbb{R}^{n \times 2n}$, $\sigma$ is chosen $0$ in predictor steps and $1$ in correctors steps, respectively, and $\mu\triangleq\frac{v^\top s}{2n}$ denotes the duality measure.
\begin{remark}\label{remark_positive}
Eqns. \eqref{eqn_Newton_a} and \eqref{eqn_Newton_b} imply that
\[
\Delta v^{\!\top\!}
\Delta s=\Delta v^{\!\top\!} (-\Omega^{\!\top\!}\Delta z)=\Delta z^{\!\top\!} (2\lambda H)\Delta z\geq0,
\]
which is critical in the following iteration complexity analysis. Note that in \cite{mizuno1993adaptive}, $\Delta v^{\!\top\!}
\Delta s=0$, thus making our analysis different and more complicated than the linear program case.
\end{remark}
By letting
\begin{equation}\label{eqn_Delta_gamma_theta_phi_psi}
    \begin{aligned}
        &\Delta \gamma= \sigma\mu\frac{1}{\phi}- \gamma + \frac{\gamma}{\phi}\Delta z,~\Delta \theta= \sigma\mu\frac{1}{\psi}-\theta-\frac{\theta}{\psi}\Delta z,\\
        &\Delta\phi = - \Delta z,~\Delta\psi = \Delta z,    
    \end{aligned}
\end{equation}
Eqn. \eqref{eqn_Newton} can be reduced into a more compact system of linear equations,
\begin{equation}{\label{eqn_compact_linsys}}
 \Big(2\lambda H+\diag\!\Big(\frac{\gamma}{\phi}\Big) + \diag\!\Big(\frac{\theta}{\psi}\Big) \!\Big) \Delta z= \sigma\mu\left(\frac{1}{\phi} - \frac{1}{\psi}\right) + \gamma - \theta.   
\end{equation}
The proposed feasible adaptive-step predictor-corrector IPM algorithm for Box-QP \eqref{eqn_Box_QP} is first described in Algorithm \ref{alg_PC_IPM}. In the next Subsection, we prove that Algorithm \ref{alg_PC_IPM} converges to the $\epsilon$-optimal solution ($v^\top s\leq\epsilon$) in the worst-case number of iterations
\begin{equation}
    N_{\max}=\left\lceil \frac{\log(\frac{2n}{\epsilon})}{-2\log\left(1-\frac{0.2348}{\sqrt{2n}}\right)}\right\rceil.
\end{equation}
which is an $O(\sqrt{n})$-order iteration complexity. In practice, Algorithm \ref{alg_PC_IPM} exhibits $O(n^{0.25})$ or $O(\log n)$-order iteration complexity due to the conservativeness of our proof.

\begin{algorithm}
    \caption{Time-certified predictor-corrector IPM for Box-QP \eqref{eqn_Box_QP}} \label{alg_PC_IPM}
    \textbf{Input}: Given a strictly feasible initial point  $(z^0,v^0,s^0)\in \mathcal{N}(1/4)$ from Remark \ref{remark_initialization} and a desired optimal level $\epsilon$. Then the worst-case iteration bound is $N_{\max}=\left\lceil \frac{\log(\frac{2n}{\epsilon})}{-2\log\left(1-\frac{0.2348}{\sqrt{2n}}\right)}\right\rceil$.
    \vspace*{.1cm}\hrule\vspace*{.1cm}

    \textbf{for} $k=0,1, 2,\cdots{},N_{\max}-1$ \textbf{do}
    \begin{enumerate}[label*=\arabic*., ref=\theenumi{}]
        \item if $(v^k)^\top s^k\leq \epsilon$, then break;
        \item Compute the predictor direction $(\Delta z_p,\Delta v_p,\Delta s_p)$ by solving Eqn. \eqref{eqn_Newton} with $(z,s,v)=(z^k,v^k,s^k)$, $\sigma\leftarrow0$, and $\mu\leftarrow\mu^k=\frac{(v^k)^\top s^k}{2n}$ (involving Eqns. \eqref{eqn_compact_linsys} and \eqref{eqn_Delta_gamma_theta_phi_psi});
        \item $\Delta \mu_{p}\leftarrow\frac{(\Delta v_p)^\top \Delta s_p}{2n}$;
        \item $\alpha^k\leftarrow \min\left(\frac{1}{2},\sqrt{\frac{\mu^{k}}{8\|\Delta v_p \pdot \Delta s_p -\Delta \mu_{p}\mathbf{1}_{2n}\|}}\right)$;
        \item $\hat{z}^k\leftarrow z^k+\alpha^k\Delta z_p,~\hat{v}^k\leftarrow v^k+\alpha^k\Delta v_p,~\hat{s}^k\leftarrow s^k+\alpha^k\Delta s_p$;
        \item Compute the corrector direction $(\Delta z_c,\Delta v_c,\Delta s_c)$ by solving Eqn. \eqref{eqn_Newton} with $(z,v,s)=(\hat{z}^k,\hat{v}^k,\hat{s}^k)$, $\sigma\leftarrow1$, and $\mu\leftarrow\hat{\mu}^k=\frac{(\hat{v}^k)^\top \hat{s}^k}{2n}$ (involving Eqns. \eqref{eqn_compact_linsys} and \eqref{eqn_Delta_gamma_theta_phi_psi});
        \item $z^{k+1}\leftarrow \hat{z}^k+\Delta z_c,~v^{k+1}\leftarrow \hat{v}^k+\Delta v_c,~s^{k+1}\leftarrow \hat{s}^k+\Delta s_c$;
    \end{enumerate}
    \textbf{end}\vspace*{.1cm}\hrule\vspace*{.1cm}
    \textbf{Output:} $z^{k+1}$.
\end{algorithm}

\subsection{Convergence and worst-case iteration complexity}
The analysis is carried out separately for the predictor and corrector steps. We first present the common results that apply to both.
\begin{lemma}\label{lemma_r}
    Let $(z,v,s)\in\mathcal{F}^+$ and let $(\Delta z, \Delta v,\Delta s)$ be the solution of Eqn. \eqref{eqn_Newton}. Then,
    \begin{equation}\label{eqn_r_def}
        \|\Delta v\pdot \Delta s\|\leq \frac{\sqrt{2}}{4}\|r\|^2
    \end{equation}
    where
    \begin{equation}
        r\triangleq \frac{1}{\sqrt{v\pdot s}}(\sigma\mu \mathbf{1}_{2n} -v\pdot s)        
    \end{equation}
\end{lemma}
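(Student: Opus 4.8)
The plan is to follow the classical trick from primal–dual IPM analysis (as in Monteiro–Adler or the presentation in Wright's book): rescale the complementarity equation so that the two unknown direction blocks become "balanced," bound the product of the rescaled vectors by a quarter of the squared norm of their sum, and then undo the rescaling. Concretely, introduce the diagonal scaling matrix $D \triangleq \diag\!\big((v/s)^{1/2}\big)$ (entrywise), so that $D^{-1}\Delta v$ and $D\Delta s$ satisfy $D(v\pdot\Delta s) = v\pdot s \pdot (D\Delta s)/\!\sqrt{v\pdot s}\cdot(\dots)$ — more cleanly, set $p \triangleq D^{-1}\Delta v$ and $q \triangleq D\Delta s$. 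Then $p\pdot q = \Delta v\pdot\Delta s$, and the third Newton equation \eqref{eqn_Newton_c} divided entrywise by $\sqrt{v\pdot s}$ becomes exactly $p + q = r$, with $r$ as defined in the statement. This is the key identity that converts the structured linear system into a purely algebraic fact about two vectors with prescribed sum.

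Next I would invoke the elementary inequality that for any two vectors $p,q\in\rr^{2n}$ with $p^\top q \ge 0$ one has $\|p\pdot q\|_2 \le \tfrac{\sqrt2}{4}\|p+q\|_2^2$; this is the standard lemma (see, e.g., \cite[Lemma 5.10]{wright1997primal} or Mizuno–Todd–Ye) whose proof splits coordinates into those where $p_iq_i\ge 0$ and those where $p_iq_i<0$ and applies $|p_iq_i|\le\tfrac14(p_i+q_i)^2$ on the nonnegative part together with a Cauchy–Schwarz/AM–GM bookkeeping argument, using $p^\top q\ge0$ to control the negative part. Here is exactly where Remark \ref{remark_positive} enters and does real work: in the linear-programming case of \cite{mizuno1993adaptive} one has $\Delta v^\top\Delta s = 0$, but for Box-QP we only have $\Delta v^\top\Delta s = \Delta z^\top(2\lambda H)\Delta z \ge 0$, and fortunately the inequality $\|p\pdot q\|\le\tfrac{\sqrt2}{4}\|p+q\|^2$ still holds under the weaker hypothesis $p^\top q\ge0$ (indeed $p^\top q = \sum p_iq_i \ge 0$ is precisely what is needed). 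Combining this with $p+q=r$ and $p\pdot q = \Delta v\pdot\Delta s$ yields $\|\Delta v\pdot\Delta s\| \le \tfrac{\sqrt2}{4}\|r\|^2$, which is \eqref{eqn_r_def}.

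The one subtlety I would be careful about is the well-definedness of the scaling: since $(z,v,s)\in\mathcal{F}^+$ we have $v,s>0$ componentwise, so $D$ and $D^{-1}$ exist and $\sqrt{v\pdot s}$ has strictly positive entries, making $r$ and the substitution legitimate; this should be stated explicitly. The main obstacle, such as it is, is not any hard estimate but rather getting the scaling bookkeeping exactly right — verifying that dividing \eqref{eqn_Newton_c} entrywise by $\sqrt{v\pdot s}$ and grouping terms as $D^{-1}\Delta v + D\Delta s$ really does produce $r$ on the right-hand side — and then citing (or re-deriving in one or two lines) the two-vector inequality in the form that needs only $p^\top q\ge 0$ rather than $p^\top q = 0$. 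Everything after that is immediate.
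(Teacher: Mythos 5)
Your proposal is correct and follows essentially the same route as the paper: the identical diagonal rescaling (your $p=D^{-1}\Delta v$, $q=D\Delta s$ match the paper's $p=\sqrt{s/v}\pdot\Delta v$, $q=\sqrt{v/s}\pdot\Delta s$), the same reduction to $p+q=r$ with $p\pdot q=\Delta v\pdot\Delta s$, and the same observation that the classical two-vector inequality $\|p\pdot q\|\le\tfrac{\sqrt2}{4}\|p+q\|^2$ holds under $p^\top q\ge0$ (the paper re-derives it inline via the sign-splitting argument you reference, rather than citing it). You also correctly flag that Remark~\ref{remark_positive} supplies $p^\top q\ge0$ in place of the LP orthogonality $p^\top q=0$, which is exactly the point the paper emphasizes.
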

\begin{proof}
Dividing both sides of Eqn. \eqref{eqn_Newton_c} by $\sqrt{v\pdot s}$, results in 
\begin{equation}\label{eqn_x_z_r_delta}
    \sqrt{\frac{s}{v}}\pdot \Delta v + \sqrt{\frac{v}{s}}\pdot \Delta s = r
\end{equation}
Let us denote
\begin{equation}\label{eqn_p_q_def}
p\triangleq\sqrt{\frac{s}{v}}\pdot \Delta v,~q\triangleq\sqrt{\frac{v}{s}}\pdot \Delta s    
\end{equation}
we have 
\[
p+q =r,~\Delta v \pdot\Delta s=p\pdot q,~\Delta v^\top \Delta s=p^\top q.
\]
By Remark \ref{remark_positive}, $\Delta v^\top \Delta s\geq0$, so $p^\top q\geq0$. Then, we have
\[
    \sum_{p_i q_i\geq0}p_i q_i\geq-\sum_{p_i q_i<0}p_i q_i,~i=1,\cdots,2n.
\]
We can obtain the result as follows:
\[
\footnotesize
\begin{aligned}
    &\quad\|\Delta v\pdot \Delta s\|^2=\|p\pdot q\|^2=\sum_{i=1}^{2n}(p_i q_i)^2\\
    &=\sum_{p_i q_i\geq0}(p_i q_i)^2+\sum_{p_i q_i<0}(p_i q_i)^2\\
    &\leq \left(\sum_{p_i q_i\geq0}p_i q_i\right)^2+ \left(\sum_{p_i q_i<0}p_i q_i\right)^2 \leq 2 \left(\sum_{p_i q_i\geq0}p_i q_i\right)^2\\
    &(\text{we have } 4p_i q_i=(p_i+ q_i)^2-(p_i - q_i)^2\\
    &\leq2\left(\sum_{p_iq_i\geq0}\frac{1}{4}(p_i + q_i)^2\right)^2\leq 2\left(\sum_{i}\frac{1}{4}(p_i + q_i)^2\right)^2=2\left(\frac{1}{4}\|r\|^2 \right)^2.
\end{aligned}
\]
that is, $\|\Delta v\pdot \Delta s\|\leq\frac{\sqrt{2}}{4}\|r\|^2$, which completes the proof.
\end{proof}

In Algorithm \ref{alg_PC_IPM}, both pairs $(\Delta z_p,\Delta v_p,\Delta s_p)$ and $(\Delta z_c,\Delta v_c,\Delta s_c)$ are obtained by solving Eqn. \eqref{eqn_Newton_b} and thus by Remark \ref{remark_positive} we have
\begin{equation}
\Delta v_p^\top \Delta s_p\geq0,~\Delta v_c^\top \Delta s_c\geq0.    
\end{equation}
The key of Algorithm \ref{alg_PC_IPM} is that, during the predictor step, the iterate transitions from $(z^k,v^k,s^k)\in\mathcal{N}(1/4)$ to $(\hat{z}^k,\hat{v}^k,\hat{s}^k)\in\mathcal{N}(1/2)$, and during the corrector step, it returns from $(\hat{z}^k,\hat{v}^k,\hat{s}^k)\in\mathcal{N}(1/2)$ to $(z^{k+1},v^{k+1},s^{k+1})\in\mathcal{N}(1/4)$, which will be proved in Lemmas \ref{lemma_predictor_1_2} and \ref{lemma_corrector_1_4}, respectively.

\begin{lemma}\label{lemma_predictor_1_2}
(\textbf{Analysis of Predictor Step}):
Consider Algorithm \ref{alg_PC_IPM}. If $(z^k,v^k,s^k)\in\mathcal{N}(1/4)$ and the predictor step applies the step-size
\begin{equation}\label{eqn_theta_k}
    \alpha^k= \min\left(\frac{1}{2},\sqrt{\frac{\mu^{k}}{8\|\Delta v_p \pdot \Delta s_p -\Delta \mu_{p}\mathbf{1}_{2n}\|}}\right),
\end{equation}
with $\mu^k=\frac{(v^k)^\top s^k}{2n}$ and
\begin{equation}
\Delta \mu_{p}\triangleq\frac{\Delta v_p^\top \Delta s_p}{2n}.
\end{equation}
Then, $(\hat{z}^k,\hat{v}^k,\hat{s}^k)\in\mathcal{N}(1/2)$ holds.
\end{lemma}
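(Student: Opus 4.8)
The plan is to track how the centered product $v\pdot s$ and the duality measure evolve along the predictor direction, using that $\sigma=0$ in the predictor step. Concretely, Eqn. \eqref{eqn_Newton_c} then reads $s^k\pdot\Delta v_p+v^k\pdot\Delta s_p=-v^k\pdot s^k$, so expanding the Hadamard product gives
\[
\hat v^k\pdot\hat s^k=(v^k+\alpha^k\Delta v_p)\pdot(s^k+\alpha^k\Delta s_p)=(1-\alpha^k)\,v^k\pdot s^k+(\alpha^k)^2\,\Delta v_p\pdot\Delta s_p,
\]
and averaging the $2n$ components, $\hat\mu^k=(1-\alpha^k)\mu^k+(\alpha^k)^2\Delta\mu_p$. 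Subtracting $\hat\mu^k\mathbf{1}_{2n}$ from the first identity yields the decomposition
\[
\hat v^k\pdot\hat s^k-\hat\mu^k\mathbf{1}_{2n}=(1-\alpha^k)\bigl(v^k\pdot s^k-\mu^k\mathbf{1}_{2n}\bigr)+(\alpha^k)^2\bigl(\Delta v_p\pdot\Delta s_p-\Delta\mu_p\mathbf{1}_{2n}\bigr).
\]

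Next I would bound the two terms. Since $(z^k,v^k,s^k)\in\mathcal{N}(1/4)$, the first term has norm at most $(1-\alpha^k)\tfrac14\mu^k$. For the second, the step-size rule \eqref{eqn_theta_k} gives $(\alpha^k)^2\,\|\Delta v_p\pdot\Delta s_p-\Delta\mu_p\mathbf{1}_{2n}\|\le\tfrac18\mu^k$: when the $\min$ is the square-root term this holds with equality, and when the $\min$ equals $\tfrac12$ one has $\|\Delta v_p\pdot\Delta s_p-\Delta\mu_p\mathbf{1}_{2n}\|\le\tfrac{\mu^k}{2}$ (including the degenerate case where this norm is $0$), which again gives $\tfrac14\cdot\tfrac{\mu^k}{2}=\tfrac{\mu^k}{8}$. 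Hence, by the triangle inequality, $\|\hat v^k\pdot\hat s^k-\hat\mu^k\mathbf{1}_{2n}\|\le(1-\alpha^k)\tfrac14\mu^k+\tfrac18\mu^k$.

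To close the centrality bound I would invoke Remark \ref{remark_positive}: $\Delta v_p^\top\Delta s_p\ge0$ forces $\Delta\mu_p\ge0$, so $\hat\mu^k\ge(1-\alpha^k)\mu^k>0$. Writing $t=1-\alpha^k$, the target $\|\hat v^k\pdot\hat s^k-\hat\mu^k\mathbf{1}_{2n}\|\le\tfrac12\hat\mu^k$ follows once $\tfrac{t}{4}\mu^k+\tfrac18\mu^k\le\tfrac{t}{2}\mu^k$, i.e. $2t+1\le 4t$, i.e. $t\ge\tfrac12$ — which is exactly what the cap $\alpha^k\le\tfrac12$ in \eqref{eqn_theta_k} delivers (and is the reason that cap is imposed). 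For strict feasibility I would re-run the very same estimate with $\alpha^k$ replaced by an arbitrary $\alpha\in[0,\alpha^k]$: one still has $(\alpha)^2\|\Delta v_p\pdot\Delta s_p-\Delta\mu_p\mathbf{1}_{2n}\|\le\tfrac18\mu^k$, $1-\alpha\ge\tfrac12$, and $\hat\mu(\alpha):=(1-\alpha)\mu^k+\alpha^2\Delta\mu_p\ge(1-\alpha)\mu^k\ge\tfrac12\mu^k$, so every component obeys $(v^k+\alpha\Delta v_p)_i(s^k+\alpha\Delta s_p)_i\ge\tfrac12\hat\mu(\alpha)\ge\tfrac14\mu^k>0$ for all $\alpha$ in $[0,\alpha^k]$; since $(v^k,s^k)>0$ at $\alpha=0$, continuity prevents any component from vanishing along the segment, so $(\hat v^k,\hat s^k)>0$. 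Combining this with the centrality bound gives $(\hat z^k,\hat v^k,\hat s^k)\in\mathcal{N}(1/2)$.

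The algebra here is elementary; the points that need care are (i) the bookkeeping of the two regimes of the $\min$ in \eqref{eqn_theta_k}, including the degenerate zero case, so that $(\alpha^k)^2\|\Delta v_p\pdot\Delta s_p-\Delta\mu_p\mathbf{1}_{2n}\|\le\tfrac18\mu^k$ holds unconditionally, and (ii) the feasibility argument, where the product lower bound must be propagated along the entire step $[0,\alpha^k]$ rather than only at its endpoint. The crucial structural input is $\Delta v_p^\top\Delta s_p\ge0$ from Remark \ref{remark_positive}, which keeps $\hat\mu^k$ from shrinking faster than $1-\alpha^k$ and makes the final inequality $2t+1\le4t$ match the step-size cap.
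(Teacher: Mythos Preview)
Your proposal is correct and follows essentially the same route as the paper: you use the predictor identity from \eqref{eqn_Newton_c} with $\sigma=0$ to get the decomposition $\hat v^k\pdot\hat s^k-\hat\mu^k\mathbf{1}_{2n}=(1-\alpha^k)(v^k\pdot s^k-\mu^k\mathbf{1}_{2n})+(\alpha^k)^2(\Delta v_p\pdot\Delta s_p-\Delta\mu_p\mathbf{1}_{2n})$, bound the two pieces via the $\mathcal{N}(1/4)$ hypothesis and the step-size rule \eqref{eqn_theta_k}, invoke $\Delta\mu_p\ge 0$ from Remark~\ref{remark_positive} to control $\hat\mu^k$ from below, and then run the estimate along the whole segment $[0,\alpha^k]$ for the positivity/continuity argument. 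The only cosmetic differences are your substitution $t=1-\alpha^k$ (the paper instead factors out $\tfrac{1-\alpha}{4}\mu^k$) and your explicit treatment of the degenerate case $\|\Delta v_p\pdot\Delta s_p-\Delta\mu_p\mathbf{1}_{2n}\|=0$.
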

\begin{proof}
For any step-size $\alpha\in[0,1]$, let us define 
\[
v^k(\alpha)\triangleq v^k+\alpha\Delta v_p,~s^k(\alpha)\triangleq s^k+\alpha\Delta s_p,
\]
and 
\[
\mu^k(\alpha)\triangleq\frac{(v^k(\alpha))^\top s^k(\alpha)}{2n}.
\]
By Eqn. \eqref{eqn_Newton_c} and the choice $\sigma=0$ in the predictor step, we have
\begin{equation}\label{eqn_mu_theta_k}
\mu^k(\alpha)=(1-\alpha)\mu^k+\alpha^2\frac{(\Delta v_p)^\top \Delta s_p}{2n}=(1-\alpha)\mu^k+\alpha^2\Delta \mu_{p},    
\end{equation}
and by $\Delta v_p^\top \Delta s_p\geq0~(\mu_p\geq0),~\alpha\geq0$, we have
\begin{equation}\label{eqn_mu_k_theta_mu}
    \mu^k(\alpha)\geq(1-\alpha)\mu^k.
\end{equation}
Next,
\[
\begin{aligned}
&\quad\left\| v^k(\alpha)\pdot s^k(\alpha) - \mu^k(\alpha) \mathbf{1}_{2n}\right\| \\
&=\left\|(1-\alpha)(v^k\pdot s^k-\mu^k\mathbf{1}_{2n})+\alpha^2(\Delta v_p\pdot \Delta s_p -\Delta\mu_p \mathbf{1}_{2n})\right\|\\
&\leq(1-\alpha)\|v^k\pdot s^k-\mu^k \mathbf{1}_{2n})\|+\alpha^2\|\Delta v_p\pdot \Delta s_p -\Delta\mu_p \mathbf{1}_{2n}) \|\\
&\leq\frac{1-\alpha}{4}\mu^k +\alpha^2\|\Delta v_p\pdot \Delta s_p -\Delta\mu_p \mathbf{1}_{2n}\|,
\end{aligned}
\]
and by the choice of $\alpha^k$ in Eqn. \eqref{eqn_theta_k}, for any $\alpha\in[0,\alpha^k]$
\[
0\leq\alpha\leq\frac{1}{2} \text{ and } 8\alpha^2\|\Delta v_p\pdot \Delta s_p-\Delta \mu_p \mathbf{1}_{2n}\|\leq\mu^k,   
\]
thus we have
\[
\begin{aligned}
&\quad\left\| v^k(\alpha)\pdot s^k(\alpha) - \mu^k(\alpha) \mathbf{1}_{2n}\right\| \\
&\leq\frac{1-\alpha}{4}\mu^k +\frac{1}{8}\mu^k\leq\frac{1-\alpha}{4}\mu^k\left(1+\frac{1}{2(1-\alpha)}\right)\\
&\leq\frac{1-\alpha}{2}\mu^k\leq\frac{1}{2}\mu^k(\alpha),
\end{aligned}
\]
which can imply that for all $\alpha\in[0,\alpha^k]$: $v^k(\alpha)\pdot s^k(\alpha)\geq\frac{1}{2}\mu^k(\alpha) \mathbf{1}_{2n}$, and further by continuity: $v^k(\alpha)>0$ and $s^k(\alpha)>0$. This completes the proof of $(\hat{z}^k,\hat{x}^k,\hat{z}^k)\in\mathcal{N}(1/2)$.
\end{proof}

\begin{lemma}\label{lemma_corrector_1_4}
(\textbf{Analysis of Corrector Step}): Consider Algorithm \ref{alg_PC_IPM}. If $(\hat{z}^k,\hat{x}^k,\hat{z}^k)\in\mathcal{N}(1/2)$, then $(z^{k+1},v^{k+1},s^{k+1})\in\mathcal{N}(1/4)$. 
\end{lemma}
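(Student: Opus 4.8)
The plan is to parallel Lemma~\ref{lemma_predictor_1_2}, now exploiting that the corrector is a \emph{full} Newton step ($\sigma=1$, unit step length). First I would record the effect of the full step on the complementarity products: substituting $v^{k+1}=\hat v^k+\Delta v_c$, $s^{k+1}=\hat s^k+\Delta s_c$ and invoking Eqn.~\eqref{eqn_Newton_c} at the base point $(\hat z^k,\hat v^k,\hat s^k)$ with $\mu=\hat\mu^k$, $\sigma=1$, the linear term $\hat s^k\pdot\Delta v_c+\hat v^k\pdot\Delta s_c=\hat\mu^k\mathbf{1}_{2n}-\hat v^k\pdot\hat s^k$ cancels $\hat v^k\pdot\hat s^k$ and yields
\[
v^{k+1}\pdot s^{k+1}=\hat\mu^k\mathbf{1}_{2n}+\Delta v_c\pdot\Delta s_c,\qquad \mu^{k+1}=\hat\mu^k+\Delta\mu_c,\quad \Delta\mu_c\triangleq\tfrac{\Delta v_c^\top\Delta s_c}{2n}.
\]
By Remark~\ref{remark_positive} we have $\Delta v_c^\top\Delta s_c\ge0$, hence $\mu^{k+1}\ge\hat\mu^k$; this inequality (absent in the LP analysis, where $\Delta v^\top\Delta s=0$) is exactly what makes the Box-QP argument nontrivial, and I would flag it.

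Second, the centering error becomes $v^{k+1}\pdot s^{k+1}-\mu^{k+1}\mathbf{1}_{2n}=\Delta v_c\pdot\Delta s_c-\Delta\mu_c\mathbf{1}_{2n}$; since $\Delta\mu_c\mathbf{1}_{2n}$ is the orthogonal projection of $\Delta v_c\pdot\Delta s_c$ onto $\mathrm{span}\{\mathbf{1}_{2n}\}$, its norm is at most $\|\Delta v_c\pdot\Delta s_c\|$. Lemma~\ref{lemma_r} (with $\sigma=1$ and base point $(\hat z^k,\hat v^k,\hat s^k)\in\mathcal{F}^+$) bounds this by $\tfrac{\sqrt2}{4}\|r_c\|^2$ where $r_c=\tfrac{1}{\sqrt{\hat v^k\pdot\hat s^k}}(\hat\mu^k\mathbf{1}_{2n}-\hat v^k\pdot\hat s^k)$. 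Now $(\hat z^k,\hat v^k,\hat s^k)\in\mathcal{N}(1/2)$ enters twice: componentwise it forces $(\hat v^k\pdot\hat s^k)_i\ge\hat\mu^k-\tfrac12\hat\mu^k=\tfrac12\hat\mu^k$, whence $\|r_c\|^2\le\tfrac{2}{\hat\mu^k}\|\hat v^k\pdot\hat s^k-\hat\mu^k\mathbf{1}_{2n}\|^2\le\tfrac{2}{\hat\mu^k}\cdot\tfrac14(\hat\mu^k)^2=\tfrac{\hat\mu^k}{2}$. Chaining the estimates, $\|v^{k+1}\pdot s^{k+1}-\mu^{k+1}\mathbf{1}_{2n}\|\le\tfrac{\sqrt2}{8}\hat\mu^k\le\tfrac14\hat\mu^k\le\tfrac14\mu^{k+1}$, which is the $\mathcal{N}(1/4)$ inequality.

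It remains to check $(z^{k+1},v^{k+1},s^{k+1})\in\mathcal{F}^+$. The scaled versions of Eqns.~\eqref{eqn_KKT_a}--\eqref{eqn_KKT_c} hold exactly after the step, because Eqns.~\eqref{eqn_Newton_a}--\eqref{eqn_Newton_b} together with $\Delta\phi=-\Delta z$, $\Delta\psi=\Delta z$ annihilate their (affine) residuals, which were already zero at $(\hat z^k,\hat v^k,\hat s^k)$. For strict positivity I would run a homotopy along $\tau\mapsto(\hat v^k+\tau\Delta v_c,\hat s^k+\tau\Delta s_c)$, $\tau\in[0,1]$: repeating the estimate above with $\tau$ in place of the unit step gives $\|(\cdot)\pdot(\cdot)-\mu_c(\tau)\mathbf{1}_{2n}\|\le(\tfrac{1-\tau}{2}+\tfrac{\sqrt2}{8}\tau^2)\hat\mu^k\le\tfrac12\hat\mu^k\le\tfrac12\mu_c(\tau)$ for $\tau\in[0,1]$, so the products stay $\ge\tfrac12\mu_c(\tau)\mathbf{1}_{2n}>0$; since $(\hat v^k,\hat s^k)>0$ at $\tau=0$ and no component of the product vanishes on $[0,1]$, continuity delivers $v^{k+1}>0$, $s^{k+1}>0$.

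I expect the crux to be the $\|r_c\|^2\le\hat\mu^k/2$ estimate together with checking that the constants close, i.e.\ $\tfrac{\sqrt2}{8}<\tfrac14$: this works only because the corrector takes a full step (no step-size shrinks the quadratic term) and because the incoming neighborhood radius is $1/2$ rather than $1$. The positivity homotopy is routine but cannot be skipped, since $\Delta v_c\pdot\Delta s_c$ is sign-indefinite in the Box-QP case, unlike in a purely feasible setting.
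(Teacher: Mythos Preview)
Your proposal is correct and follows essentially the same approach as the paper: both rely on Lemma~\ref{lemma_r} together with the componentwise lower bound $(\hat v^k\pdot\hat s^k)_i\ge\tfrac12\hat\mu^k$ from $\mathcal{N}(1/2)$ to obtain $\|\Delta v_c\pdot\Delta s_c\|\le\tfrac{\sqrt2}{8}\hat\mu^k$, then use $\mu^{k+1}\ge\hat\mu^k$ (via Remark~\ref{remark_positive}) to close the $\mathcal{N}(1/4)$ inequality, and finally establish positivity by a continuity argument along $\tau\in[0,1]$. The only cosmetic difference is ordering: the paper runs the whole argument for general $\alpha\in[0,1]$ and specializes to $\alpha=1$ at the end, whereas you first dispatch $\alpha=1$ and then redo the homotopy for positivity.
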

\begin{proof}
For any step-size $\alpha\in[0,1]$, let us define 
\[
v^{k+1}(\alpha)\triangleq \hat{v}^k+\alpha\Delta v_c,~s^{k+1}(\alpha)\triangleq \hat{s}^k+\alpha\Delta s_c,
\]
and 
\[
\mu^{k+1}(\alpha)\triangleq\frac{(v^{k+1}(\alpha))^\top s^{k+1}(\alpha)}{2n},
\]
then by Eqn. \eqref{eqn_Newton_a} and the choice $\sigma=1$ in the corrector step, we have
\begin{equation}\label{eqn_mu_theta_k_1}
\mu^{k+1}(\alpha)=\hat{\mu}^k + \alpha^2\frac{\Delta v_c^\top \Delta s_c}{2n},    
\end{equation}
where $\hat{\mu}^k=\frac{(\hat{v}^k)^\top \hat{s}^k}{2n}$. By $\Delta v_c^\top \Delta s_c\geq0$ and $\alpha\geq0$, we have
\begin{equation}\label{eqn_mu_k1}
    \mu^{k+1}(\alpha)\geq\hat{\mu}^k.
\end{equation}
Next, let us define 
\begin{equation}
    \Delta \mu_{c}\triangleq\frac{\Delta v_c^\top \Delta s_c}{2n},
\end{equation}
and we have
\[
\begin{aligned}
&\quad\left\|v^{k+1}(\alpha)\pdot s^{k+1}(\alpha)-\mu^{k+1}(\alpha)\mathbf{1}_{2n}  \right\|    \\
&=\left\|(1-\alpha)(\hat{v}^k\pdot \hat{s}^k-\hat{\mu}^k\mathbf{1}_{2n} )+\alpha^2(\Delta v_c\pdot \Delta s_c -\Delta\mu^c \mathbf{1}_{2n} )\right\|\\
&\leq(1-\alpha)\|\hat{v}^k\pdot \hat{s}^k-\hat{\mu}^ke\|+\alpha^2\|\Delta v_c\pdot \Delta s_c -\Delta\mu_c \mathbf{1}_{2n} \|\\
&\leq\frac{(1-\alpha)\hat{\mu}^k}{2}+\alpha^2\|\Delta v_c\pdot \Delta s_c -\Delta\mu_c \mathbf{1}_{2n}\|.
\end{aligned}
\]
Focusing on the term $\|\Delta v_c\pdot \Delta s_c -\Delta\mu_c \mathbf{1}_{2n}\|$,
\[
\begin{aligned}
&\quad\|\Delta v_c\pdot \Delta s_c -\Delta\mu_c \mathbf{1}_{2n} \|\\
&=\sqrt{\sum_{i=1}^{2n}(\Delta v_{c,i}\Delta s_{c,i})^2-2\Delta \mu_{c} (\Delta v_c^\top\Delta s_c) + 2n (\Delta \mu_{c} )^2}\\
&= \sqrt{\sum_{i=1}^n(\Delta  v_{c,i}\Delta s_{c,i})^2 - 2n (\Delta \mu_{c} )^2 }\\
&\leq\left\|\Delta v_c \pdot \Delta s_c \right\|
\end{aligned}
\]
By Lemma \ref{lemma_r} and the choice $\sigma=1$ in the corrector step, we have
\[
\left\|\Delta v_c \pdot \Delta s_c \right\| \leq\frac{\sqrt{2}}{4}\left\| \frac{1}{\sqrt{\hat{v}^k\pdot \hat{s}^k}}(\hat{\mu}^k\mathbf{1}_{2n} -\hat{v}^k\pdot \hat{s}^k)\right\|^2.
\]
Because $(\hat{z}^k,\hat{x}^k,\hat{z}^k)\in\mathcal{N}(1/2)$, we have $\|\hat{v}^k\pdot \hat{s}^k-\hat{\mu}^k\mathbf{1}_{2n}\|\leq\frac{1}{2}\hat{\mu}^k$, so for each $i=1,\cdots,2n$
\[
\frac{1}{2}\hat{\mu}^k\leq \hat{v}_i^k \hat{s}_i^k\leq\frac{3}{2}\hat{\mu}^k \Leftrightarrow \Big(\min(\sqrt{\hat{v}^k\pdot \hat{s}^k}\Big)^2=\frac{1}{2}\hat{\mu}^k
\]
Thus, we have
\begin{equation}\label{eqn_useful}
    \begin{aligned}
    &\quad \left\| \frac{1}{\sqrt{\hat{v}^k\pdot \hat{s}^k}}(\hat{\mu}^k\mathbf{1}_{2n} -\hat{v}^k\pdot \hat{s}^k)\right\|^2\\
    &\leq\frac{1}{(\min(\sqrt{\hat{v}^k\pdot \hat{s}^k}))^2}\|\hat{\mu}^k\mathbf{1}_{2n} -\hat{v}^k\pdot \hat{s}^k\|^2\leq\frac{1}{2}\hat{\mu}^k
    \end{aligned}
\end{equation}
Then, by Eqn. \eqref{eqn_mu_k1}, for any step-size $\alpha\in[0,1]$, we have
\[
\begin{aligned}
&\quad\|v^{k+1}(\alpha)\pdot s^{k+1}(\alpha)-\mu^{k+1}(\alpha)\mathbf{1}_{2n}\|\\
&\leq\frac{(1-\alpha)\hat{\mu}^k}{2}+\alpha^2 \left\|\Delta v_c \pdot \Delta s_c \right\|\\
&\leq\frac{(1-\alpha)\hat{\mu}^k}{2}+\alpha^2\frac{\sqrt{2}}{4}\frac{1}{2}\hat{\mu}^k\leq\frac{(1-\alpha)\hat{\mu}^k}{2}+\alpha^2\frac{2}{8}\hat{\mu}^k\\
&=\frac{(\alpha-1)^2+1}{4}\hat{\mu}^k\leq\frac{(\alpha-1)^2+1}{4}\mu^{k+1}(\alpha)\leq\frac{1}{2}\mu^{k+1}(\alpha).
\end{aligned}
\]
which can imply that for all $\alpha\in[0,1]$
\[
v^{k+1}(\alpha)\pdot s^{k+1}(\alpha)\geq\frac{1}{2}\mu^{k+1}(\alpha)\mathbf{1}_{2n}  
\]
and namely by continuity $v^{k+1}(\alpha)>0$ and $s^{k+1}(\alpha)>0$. 

Moreover, in the special case that $\alpha=1$, we have $\frac{(\alpha-1)^2+1}{4}=\frac{1}{4}$ and it proves that
\[
\|v^{k+1}\pdot s^{k+1}-\mu^{k+1}\mathbf{1}_{2n}\|\leq\frac{1}{4}\mu^{k+1},
\]
which completes the proof.
\end{proof}

\begin{lemma}\label{lemma_delta_mu}
    Consider Algorithm \ref{alg_PC_IPM}. At the $k$-th iteration, the following inequalities
    \begin{subequations}
        \begin{align}
             \Delta \mu_p&\leq\frac{1}{4}\mu^k\label{eqn_delta_mu_a}\\
             \hat{\mu}^k&\leq\left(1-\frac{\alpha^k}{2}\right)^2\mu^k\label{eqn_delta_mu_b}\\
              \Delta \mu_c&\leq\left(1-\frac{\alpha^k}{2}\right)^2\frac{1}{16n}\mu^k\label{eqn_delta_mu_c}
        \end{align}
    \end{subequations}
    hold.
\end{lemma}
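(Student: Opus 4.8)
The plan is to derive the three bounds in the order \eqref{eqn_delta_mu_a} $\to$ \eqref{eqn_delta_mu_b} $\to$ \eqref{eqn_delta_mu_c}, each feeding into the next, and in every case the workhorse is the elementary identity already exploited inside Lemma \ref{lemma_r}: with $p,q$ as in \eqref{eqn_p_q_def} one has $p+q=r$, hence $\Delta v^\top\Delta s=p^\top q=\tfrac14\left(\|p+q\|^2-\|p-q\|^2\right)\le\tfrac14\|r\|^2$, where $r$ is the vector defined in Lemma \ref{lemma_r}.

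For \eqref{eqn_delta_mu_a} I would invoke Lemma \ref{lemma_r} at the current iterate $(z^k,v^k,s^k)\in\mathcal{N}(1/4)\subset\mathcal{F}^+$ with $\sigma=0$, so that $r=-\sqrt{v^k\pdot s^k}$ and thus $\|r\|^2=(v^k)^\top s^k=2n\mu^k$; then $\Delta v_p^\top\Delta s_p=p^\top q\le\tfrac14\|r\|^2=\tfrac{n}{2}\mu^k$, and dividing by $2n$ gives $\Delta\mu_p\le\tfrac14\mu^k$. For \eqref{eqn_delta_mu_b} I would simply evaluate the predictor recursion \eqref{eqn_mu_theta_k} at $\alpha=\alpha^k$, obtaining $\hat\mu^k=(1-\alpha^k)\mu^k+(\alpha^k)^2\Delta\mu_p$, substitute \eqref{eqn_delta_mu_a}, and complete the square using $(1-\alpha^k)+\tfrac14(\alpha^k)^2=\left(1-\tfrac{\alpha^k}{2}\right)^2$.

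For \eqref{eqn_delta_mu_c} I would apply Lemma \ref{lemma_r} at $(\hat z^k,\hat v^k,\hat s^k)$, which Lemma \ref{lemma_predictor_1_2} guarantees lies in $\mathcal{N}(1/2)\subset\mathcal{F}^+$, this time with $\sigma=1$; the corresponding $r$ is exactly the quantity already bounded in \eqref{eqn_useful} inside the proof of Lemma \ref{lemma_corrector_1_4}, namely $\|r\|^2\le\tfrac12\hat\mu^k$. Then $\Delta v_c^\top\Delta s_c=p^\top q\le\tfrac14\|r\|^2\le\tfrac18\hat\mu^k$, so $\Delta\mu_c=\tfrac{\Delta v_c^\top\Delta s_c}{2n}\le\tfrac{1}{16n}\hat\mu^k$, and plugging in \eqref{eqn_delta_mu_b} closes it. None of these steps is technically hard; the only thing to watch is bookkeeping — calling Lemma \ref{lemma_r} at the correct base point with the correct value of $\sigma$ so that $r$ takes the stated closed form, and recycling the estimate \eqref{eqn_useful} rather than re-deriving it. If anything counts as an obstacle it is \eqref{eqn_delta_mu_c}, which has to weave together the $\mathcal{N}(1/2)$-membership from Lemma \ref{lemma_predictor_1_2}, the $\tfrac14\|r\|^2$ inequality, and \eqref{eqn_delta_mu_b} in the right order, but that is organizational rather than mathematical.
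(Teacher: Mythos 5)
Your proposal is correct and tracks the paper's own proof almost line for line: the paper also uses the polarization identity $4\Delta v^\top\Delta s + \|p-q\|^2 = \|r\|^2$ (which is your $p^\top q \le \tfrac14\|r\|^2$) to get \eqref{eqn_delta_mu_a}, then substitutes it into \eqref{eqn_mu_theta_k} and completes the square for \eqref{eqn_delta_mu_b}, and finally reuses \eqref{eqn_useful} plus \eqref{eqn_delta_mu_b} for \eqref{eqn_delta_mu_c}. The only cosmetic discrepancy is that you phrase the first step as invoking Lemma \ref{lemma_r}, whereas what is actually being used is the intermediate inequality $\Delta v^\top\Delta s\le\tfrac14\|r\|^2$ from inside its proof (not the lemma's stated conclusion on $\|\Delta v\pdot\Delta s\|$), but you make that distinction clear yourself.
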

\begin{proof}
Taking the $2$-norm on both sides of Eqn. \eqref{eqn_x_z_r_delta} (by the definition in Eqn. \eqref{eqn_p_q_def}) results in
\[
    4\Delta v^\top \Delta s + \|p-q\|^2 = \|r\|^2
\]
which implies that (by the definition in Eqn. \eqref{eqn_r_def})
\begin{equation}
    4\Delta v^\top \Delta s\leq\|r\|^2=\left\|\frac{1}{\sqrt{v\pdot s}}(\sigma\mu \mathbf{1}_{2n}-v\pdot s) \right\|^2
\end{equation}

Regarding the inequality \eqref{eqn_delta_mu_a}, the predictor step adopts $\sigma\leftarrow0$ and $(z,v,s)=(z^k,v^k,s^k)$, this yields
\[
\begin{aligned}
\Delta v_p^\top \Delta s_p&\leq\frac{1}{4}\left\| \frac{1}{\sqrt{v^k\pdot s^k}}(0\mu^k\mathbf{1}_{2n}-v^k\pdot s^k)\right\|^2\\
&= \frac{1}{4}(v^k)^\top s^k\Leftrightarrow\Delta \mu_p\leq\frac{1}{4}\mu^k,  
\end{aligned}
\]
which completes the proof of the inequality \eqref{eqn_delta_mu_a}.

Regarding the inequality \eqref{eqn_delta_mu_b}, by Eqn. \eqref{eqn_mu_theta_k} and  the statement \textit{i)}, we have
\[
\begin{aligned}
    \hat{\mu}^k=(1-\alpha^k)\mu^k+(\alpha^k)^2\Delta \mu_p&\leq(1-\alpha^k)\mu^k+\frac{(\alpha^k)^2}{4}\mu^k\\
    &=\left(1-\frac{\alpha^k}{2}\right)^2\mu^k,
\end{aligned}
\]
which completes the proof of the inequality \eqref{eqn_delta_mu_b}.

Regarding the inequality \eqref{eqn_delta_mu_c} the corrector step adopts $(z,v,s)=(\hat{z}^k,\hat{v}^k,\hat{s}^k)$, $\sigma\leftarrow1$, and $\mu\leftarrow\hat{\mu}^k$, and by Eqn. \eqref{eqn_useful},  this yields
\[
\Delta v_c^\top \Delta s_c\leq\frac{1}{4}\left\|\frac{1}{\sqrt{\hat{v}^k\pdot \hat{s}^k}}(\hat{\mu}^k \mathbf{1}_{2n}-\hat{v}^k\pdot \hat{s}^k) \right\|^2\leq\frac{1}{8}\hat{\mu}^k,
\]
that is (by the inequality \eqref{eqn_delta_mu_b}),
\[
\Delta\mu_c\leq\frac{1}{16n}\hat{\mu}^k\leq\left(1-\frac{\alpha^k}{2}\right)^2\frac{1}{16n}\mu^k,
\]
which completes the proof of the inequality \eqref{eqn_delta_mu_c}.
\end{proof}

\begin{theorem}
    Let $\{(z^k,v^k,s^k) \}$ be generated by Algorithm \ref{alg_PC_IPM}. Then
    \begin{equation}
        \mu^{k+1}\leq\left(1-\frac{0.2348}{\sqrt{2n}}\right)^2\mu^k
    \end{equation}

    Furthermore, Algorithm \ref{alg_PC_IPM} requires at most
    \begin{equation}\label{eqn_N_max}
        N_{\max}=\left\lceil \frac{\log(\frac{2n}{\epsilon})}{-2\log\left(1-\frac{0.2348}{\sqrt{2n}}\right)}\right\rceil.
    \end{equation}
\end{theorem}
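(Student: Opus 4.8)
The plan is to chain the three preceding lemmas by induction on $k$. Assuming $(z^k,v^k,s^k)\in\mathcal{N}(1/4)$, Lemma~\ref{lemma_predictor_1_2} places the predictor iterate in $\mathcal{N}(1/2)$ and Lemma~\ref{lemma_corrector_1_4} returns the corrector iterate to $\mathcal{N}(1/4)$, so the hypotheses behind Lemma~\ref{lemma_delta_mu} are in force at every $k$. Because the corrector uses the full step $\alpha=1$, Eqn.~\eqref{eqn_mu_theta_k_1} gives $\mu^{k+1}=\hat{\mu}^k+\Delta\mu_c$; together with bounds \eqref{eqn_delta_mu_b} and \eqref{eqn_delta_mu_c} of Lemma~\ref{lemma_delta_mu} this yields
\[
\mu^{k+1}\le\Bigl(1-\tfrac{\alpha^k}{2}\Bigr)^{2}\Bigl(1+\tfrac{1}{16n}\Bigr)\mu^k .
\]
Hence the theorem reduces to (i) a data-independent lower bound on the predictor step-size $\alpha^k$, and (ii) a scalar inequality that absorbs the stray factor $1+\tfrac{1}{16n}$ into the clean constant $0.2348$.

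For (i), note first --- exactly as in the chain of estimates inside the proof of Lemma~\ref{lemma_corrector_1_4}, using $\Delta v_p^{\top}\Delta s_p=2n\Delta\mu_p$ --- that $\|\Delta v_p\pdot\Delta s_p-\Delta\mu_p\mathbf{1}_{2n}\|^{2}=\|\Delta v_p\pdot\Delta s_p\|^{2}-2n(\Delta\mu_p)^{2}\le\|\Delta v_p\pdot\Delta s_p\|^{2}$. Applying Lemma~\ref{lemma_r} to the predictor data $\sigma=0$, $(z,v,s)=(z^k,v^k,s^k)$, for which $r=-\sqrt{v^k\pdot s^k}$ and hence $\|r\|^{2}=(v^k)^{\top}s^k=2n\mu^k$, gives $\|\Delta v_p\pdot\Delta s_p\|\le\tfrac{\sqrt{2}}{4}(2n\mu^k)=\tfrac{\sqrt{2}}{2}n\mu^k$. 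Substituting into the definition \eqref{eqn_theta_k} of $\alpha^k$,
\[
\alpha^k\ge\min\!\Bigl(\tfrac12,\ \sqrt{\tfrac{\mu^k}{8\cdot(\sqrt{2}/2)\,n\mu^k}}\,\Bigr)=\min\!\Bigl(\tfrac12,\ \tfrac{2^{-3/4}}{\sqrt{2n}}\Bigr)=\tfrac{2^{-3/4}}{\sqrt{2n}},
\]
the last equality since $2^{-3/4}/\sqrt{2}<\tfrac12$ for all $n\ge1$. Therefore $1-\tfrac{\alpha^k}{2}\le1-\tfrac{2^{-7/4}}{\sqrt{2n}}\le1-\tfrac{0.2973}{\sqrt{2n}}$, so that
\[
\mu^{k+1}\le\Bigl(1-\tfrac{0.2973}{\sqrt{2n}}\Bigr)^{2}\Bigl(1+\tfrac{1}{16n}\Bigr)\mu^k .
\]

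For (ii), set $u=1/\sqrt{2n}\in(0,1/\sqrt{2}\,]$, so $1/(16n)=u^{2}/8$; using $\sqrt{1+u^{2}/8}\le1+u^{2}/16$ it suffices to verify $(1+u^{2}/16)(1-0.2973\,u)\le1-0.2348\,u$ on $(0,1/\sqrt{2}\,]$, which after dividing through by $u>0$ reduces to $u/16-0.2973\,u^{2}/16\le 0.0625$, true because the left-hand side is at most $u/16\le1/(16\sqrt{2})<0.0625$. This gives the claimed one-step contraction $\mu^{k+1}\le(1-0.2348/\sqrt{2n})^{2}\mu^k$. Finally, the initialization in Remark~\ref{remark_initialization} makes $\mu^0=1$, so $\mu^k\le(1-0.2348/\sqrt{2n})^{2k}$; since $(v^k)^{\top}s^k=2n\mu^k$, the termination test $(v^k)^{\top}s^k\le\epsilon$ is guaranteed once $(1-0.2348/\sqrt{2n})^{2k}\le\epsilon/(2n)$, i.e. once $k\ge\log(2n/\epsilon)\big/\bigl(-2\log(1-0.2348/\sqrt{2n})\bigr)$, which is exactly $N_{\max}$; the elementary bound $-\log(1-x)\ge x$ then shows $N_{\max}=O(\sqrt{n})$.

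The one genuinely new obstacle, relative to the linear-programming predictor--corrector template of \cite{mizuno1993adaptive}, is that here $\Delta v_p^{\top}\Delta s_p$ is only nonnegative rather than zero (Remark~\ref{remark_positive}), so the predictor direction is not orthogonal and the clean LP argument for lower-bounding $\alpha^k$ does not apply; one must instead route through Lemma~\ref{lemma_r}, and then track the numerical constants carefully enough that the leftover factor $1+\tfrac{1}{16n}$ is swallowed by weakening $2^{-7/4}\approx0.2973$ down to $0.2348$. The remaining steps are routine bookkeeping.
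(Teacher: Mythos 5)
Your proof is correct and follows essentially the same route as the paper: combine Lemma~\ref{lemma_delta_mu}(b)--(c) to get $\mu^{k+1}\le(1-\alpha^k/2)^2(1+\tfrac{1}{16n})\mu^k$, lower-bound $\alpha^k$ by $2^{-3/4}/\sqrt{2n}$ via Lemma~\ref{lemma_r} with $\sigma=0$, and then close with a scalar inequality that absorbs the $1+\tfrac{1}{16n}$ factor. The only cosmetic difference is in that last scalar step: the paper distributes one factor of $(1-\alpha^k/2)$, uses $\tfrac{1}{2\sqrt{2n}}\le\tfrac14$ (hence restricting to $n\ge2$), and regroups into $\bigl(1-\tfrac{2^{0.25}-0.25}{4\sqrt{2n}}\bigr)^2$, while you substitute $u=1/\sqrt{2n}$ and use $\sqrt{1+u^2/8}\le1+u^2/16$, which is a touch cleaner and holds for all $n\ge1$; both arrive at the same constant $0.2348$.
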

\begin{proof}
By Eqn. \eqref{eqn_mu_theta_k} and \eqref{eqn_mu_theta_k_1}
\[
\begin{aligned}
\mu^{k+1}&=\hat{\mu}^k+\Delta\mu_c=(1-\alpha^k)\mu^k+(\alpha^k)^2\Delta\mu_p+\Delta\mu_c\\
&\quad\quad(\text{by Lemma \ref{lemma_delta_mu}})\\
&\leq(1-\alpha^k)\mu^k+\frac{(\alpha^k)^2}{4}\mu^k+\left(1-\frac{\alpha^k}{2}\right)^2\frac{1}{16n}\mu^k\\
&\leq\left(1-\frac{\alpha^k}{2}\right)^2\left(1+\frac{1}{16n}\right)\mu^k.
\end{aligned}
\]

Because we have
\[
\begin{aligned}
&\quad\|\Delta v_p\pdot \Delta s_p -\Delta\mu_p \mathbf{1}_{2n}\|\\
&=\sqrt{\sum_{i=1}^{2n}(\Delta v_{p,i}\Delta s_{p,i})^2-2\Delta \mu_{p} \Delta v_p^\top \Delta s_p+ 2n (\Delta \mu_{p} )^2}\\
&= \sqrt{\sum_{i=1}^{2n}(\Delta v_{p,i}\Delta s_{p,i})^2 - 2n (\Delta \mu_{p} )^2 }\leq\left\|\Delta v_p \pdot \Delta s_p \right\|\\
&\quad\quad\text{(By Lemma \ref{lemma_r} and the choice $\sigma=0$)}\\
&\leq \frac{\sqrt{2}}{4}\left\| \frac{1}{\sqrt{v^k\pdot s^k}}(-v^k\pdot s^k)\right\|^2=\frac{\sqrt{2}}{4}(2n)\mu^k,\\
\end{aligned}
\]
the choice of $\alpha^k$ in Eqn. \eqref{eqn_theta_k} can imply that for all $n\geq1$
\[
\alpha^k\geq\min\left(\frac{1}{2},~\frac{2^{0.25}}{2}\frac{1}{\sqrt{2n}}\right)\geq\frac{2^{0.25}}{2}\frac{1}{\sqrt{2n}},
\]
which can imply that the following condition for all $n\geq2$ ($n=1$ no need for optimization)
\[
\begin{aligned}
&\quad\left(1-\frac{\alpha^k}{2}\right)^2\left(1+\frac{1}{16n}\right)\\
&\leq\left( 1-\frac{\alpha^k}{2}\right) \left(1-\frac{2^{0.25}}{4}\frac{1}{\sqrt{2n}}+\frac{1}{16n}-\frac{2^{0.25}}{64n\sqrt{2n}}\right)\\
&\leq\left( 1-\frac{\alpha^k}{2}\right)\left(1-\frac{2^{0.25}}{4}\frac{1}{\sqrt{2n}}+\frac{1}{16n}\right)\\
&=\left( 1-\frac{\alpha^k}{2}\right)\left(1-\Big(\frac{2^{0.25}-\frac{1}{2\sqrt{2n}}}{4}\Big)\frac{1}{\sqrt{2n}}\right)\\
&\leq\left(1-\frac{2^{0.25}}{4}\frac{1}{\sqrt{2n}}\right)\left(1-\frac{2^{0.25}-0.25}{4}\frac{1}{\sqrt{2n}}\right)\\
&\leq\left(1-\frac{2^{0.25}-0.25}{4}\frac{1}{\sqrt{2n}}\right)^2=\left(1-\frac{0.2348}{\sqrt{2n}}\right)^2
\end{aligned}
\]
holds, which proves that $\mu^{k+1}\leq\left(1-\frac{0.2348}{\sqrt{2n}}\right)^2\mu^k$.

Based on the above and the initial value $\mu^0=1$,
\[
\mu^{k}\leq\left(1-\frac{0.2348}{\sqrt{2n}}\right)^{2k}\mu^0=\left(1-\frac{0.2348}{\sqrt{2n}}\right)^{2k}
\]
Hence $(v^k)^\top s_k\leq\epsilon$ holds if $2n\left(1-\frac{0.2348}{\sqrt{2n}}\right)^{2k}\leq\epsilon$.
Taking logarithms gives that $2k \log\left(1-\frac{0.2348}{\sqrt{2n}}\right)+\log(2n)\leq\log\epsilon$,
which holds if $k \geq N_{\max}=\left\lceil \frac{\log(\frac{2n}{\epsilon})}{-2\log\left(1-\frac{0.2348}{\sqrt{2n}}\right)}\right\rceil$, which completes the proof.
\end{proof}

\begin{remark}\label{remark_fast}
Algorithm \ref{alg_PC_IPM} exhibit $O(\log n)$ or $O(n^{0.25})$ iteration behavior in practice while the worst-case iteration bound $N_{\max}$ in Eqn .~\eqref{eqn_N_max} is $O(\sqrt{n})$ from the conservative estimate of the adaptive step-size $\alpha^k=\Omega(1/\sqrt{n})$ via Eqn.~\eqref{eqn_r_def} in Lemma \ref{lemma_r}. Refs.~\cite{mizuno1989anticipated} and \cite{mizuno1990anticipated} provide probabilistic proofs that, in linear programming cases, the adaptive step sizes satisfy $\alpha^k = \Omega(1/n^{0.25})$ and $\alpha^k = \Omega(1/\log n)$, respectively.
\end{remark}

\section[Numerical Examples]{Numerical Examples\footnotemark}
\footnotetext{The MATLAB code for Algorithm \ref{alg_PC_IPM} and numerical examples are publicly available at \url{https://github.com/liangwu2019/PC_BoxQP}.}

\begin{figure}
    \centering
    \includegraphics[width=1\linewidth]{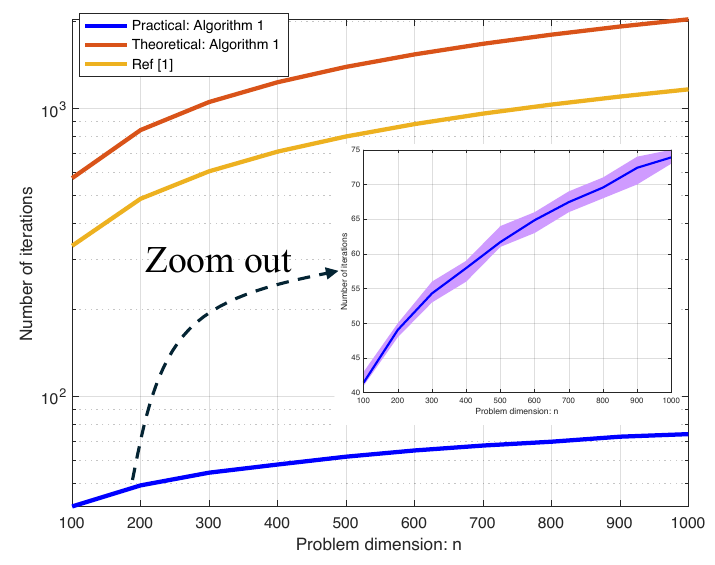}
    \caption{Practical and theoretical iteration counts of Algorithm~\ref{alg_PC_IPM} compared with the exact (practical = theoretical) iteration counts from Ref. \cite{wu2025direct}.}
    \label{fig_iters}
\end{figure}

\subsection{Practical and theoretical behavior on random Box-QPs}
We apply Algorithm~\ref{alg_PC_IPM} to random Box-QP problems with dimensions $n$ ranging from $100$ to $1000$, and evaluate its practical iteration counts. These are then compared with the theoretical worst-case bound $N_{\max}$ in Eqn.~\eqref{eqn_N_max} and with the results reported in \cite{wu2025direct}. Fig. \ref{fig_iters} shows that Algorithm \ref{alg_PC_IPM} practically behaves far less iterations than the $O(\sqrt{n})$-iteration-complexity results in Eqn.~\eqref{eqn_N_max} and Ref. \cite{wu2025direct}, which corresponding to Remark \ref{remark_fast}. Fig. \ref{fig_iters} also shows that the practical number of iterations of Algorithm \ref{alg_PC_IPM} exhibits very small variations for Box-QPs of the same dimension.

\begin{figure*}[!htbp]\label{fig}
\begin{picture}(140,110)
\put(-30,-30){\includegraphics[width=75mm]{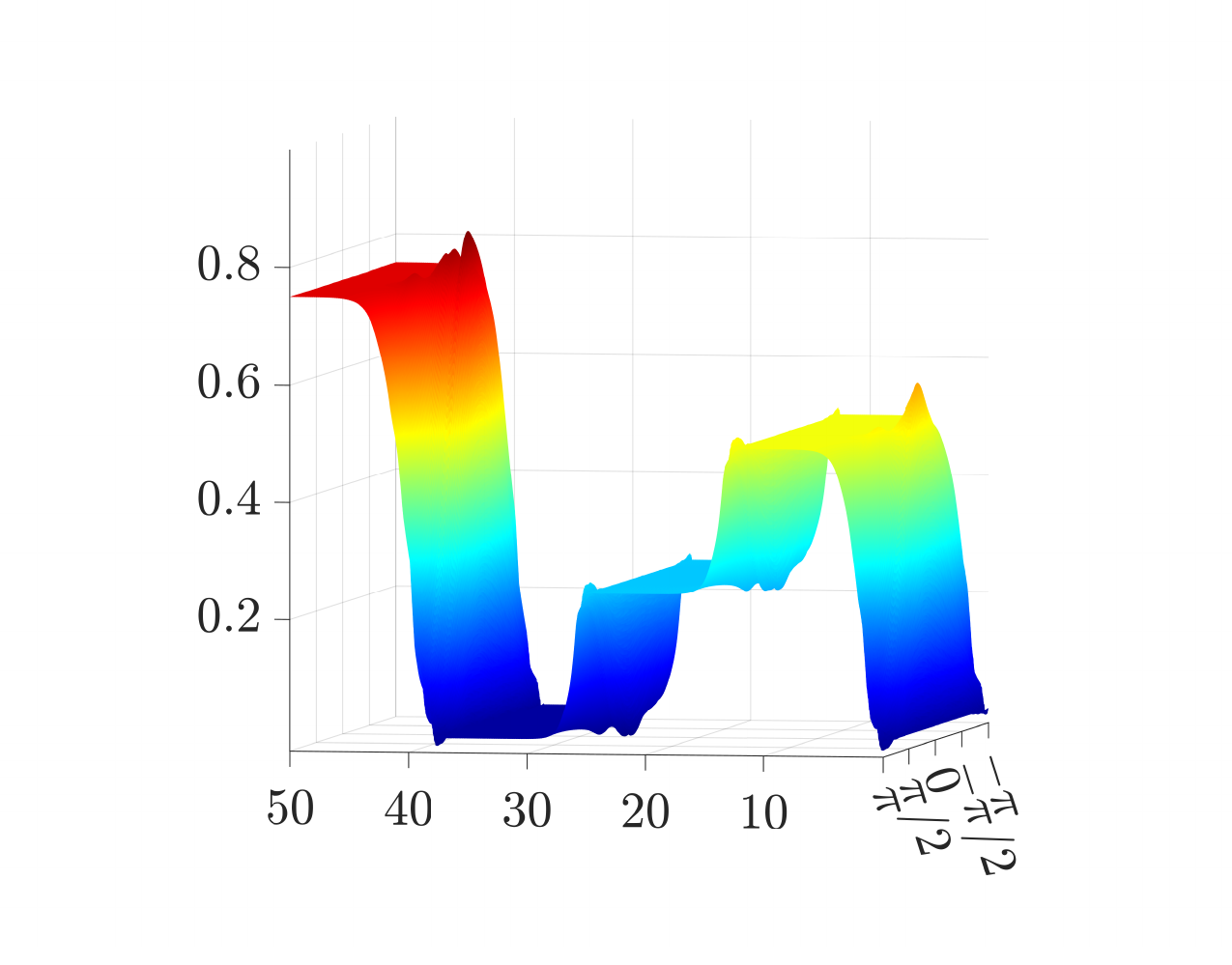}}
\put(160,-10){\includegraphics[width=55mm]{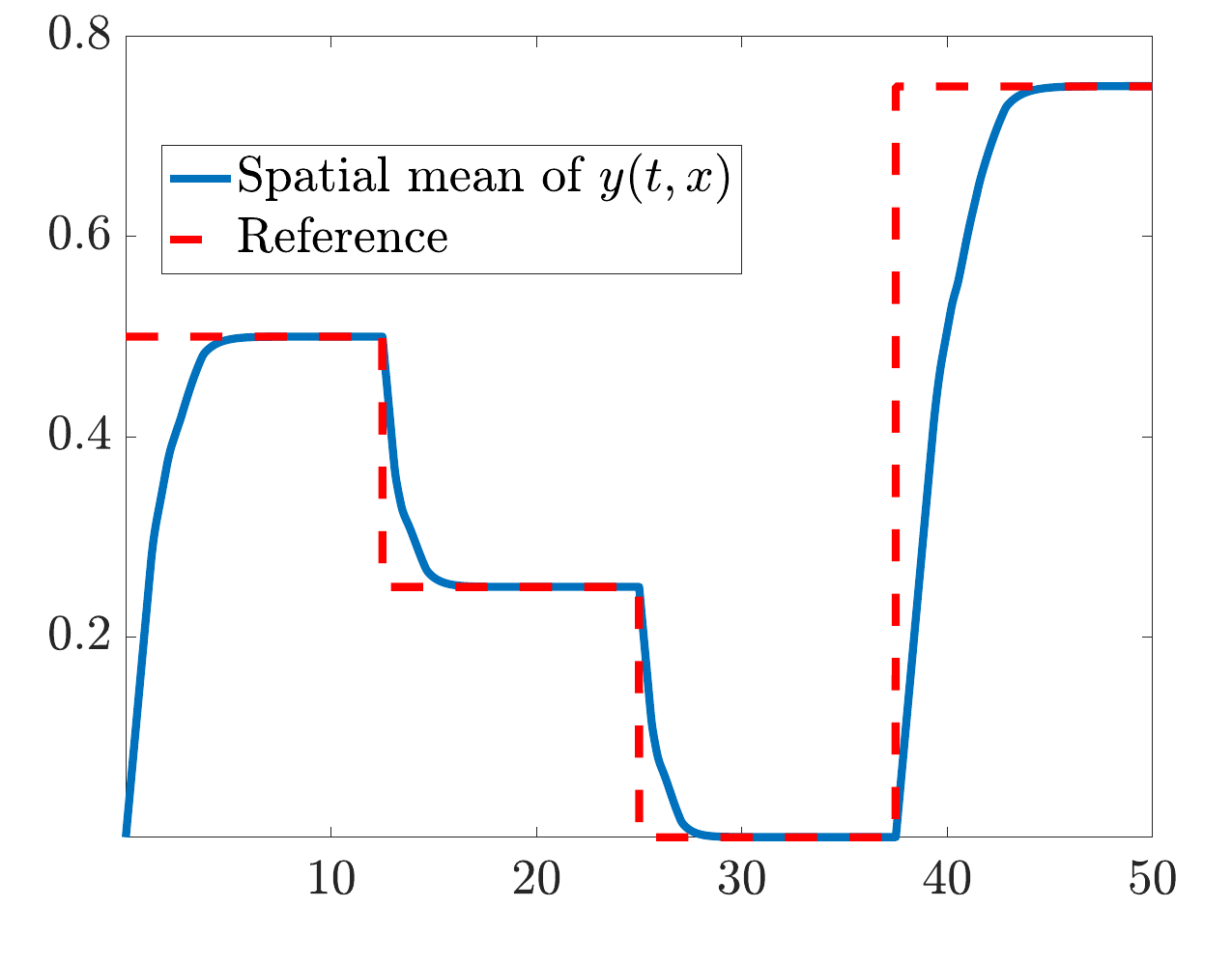}}
\put(330,-10){\includegraphics[width=55mm]{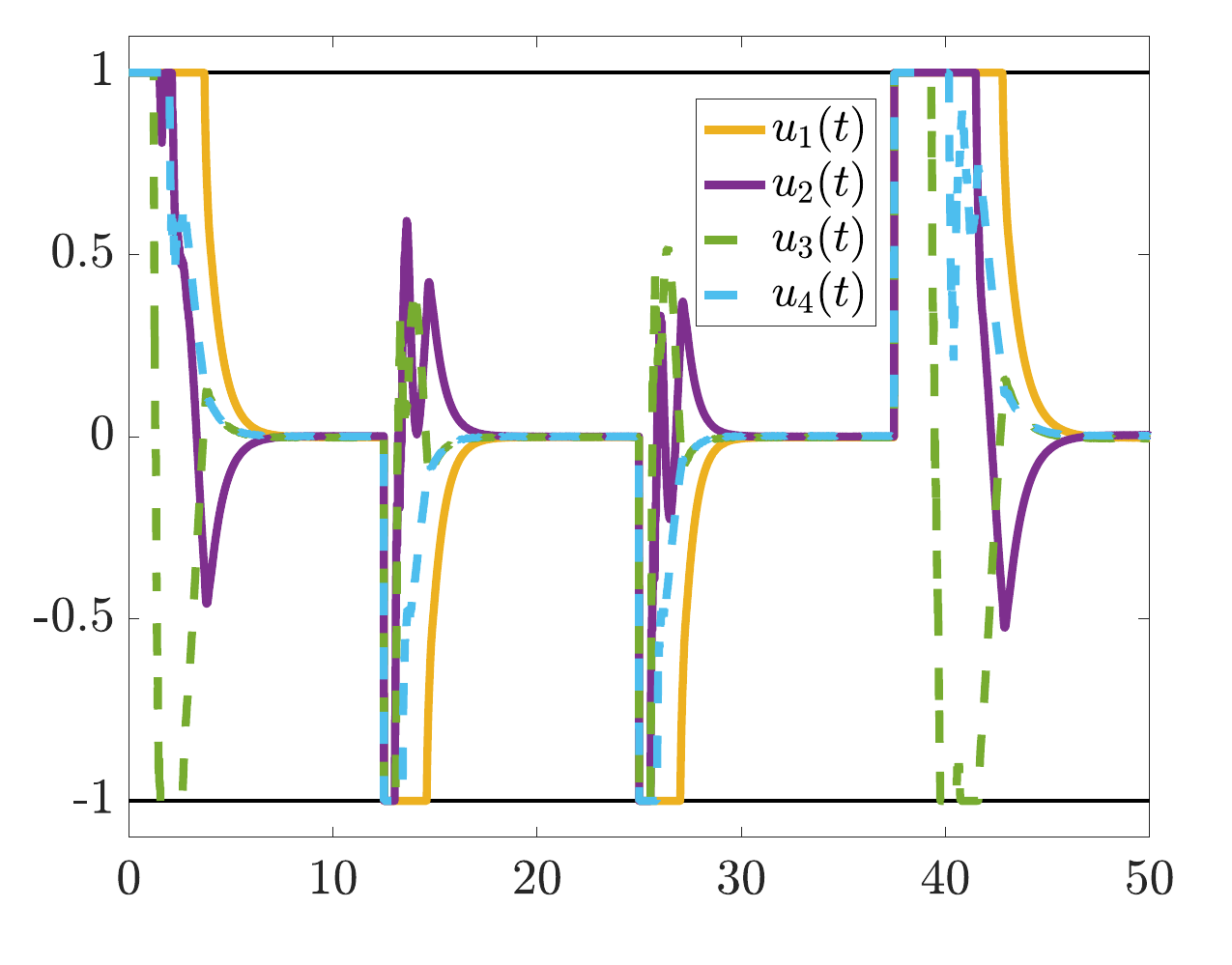}}
\put(115,-8){\footnotesize $x$}
\put(45,-10){\footnotesize $t [\mr{s}]$}
\put(-7,50){\footnotesize \rotatebox{90}{$y(t,x)$}}
\put(245,-10){\footnotesize $t [\mr{s}]$}
\put(402,-10){\footnotesize $t\,[\mr{s}]$}
\end{picture}
\caption{Closed-loop simulation of the nonlinear KdV system with Box-QP based MPC controller -- Tracking a piecewise constant spatial profile reference. Left: time evolution of the spatial profile $y(t,x)$. Middle: spatial mean of the $y(t,x)$. Right: the four control inputs.}
\label{fig_kdv}
\end{figure*}

\subsection{Nonlinear PDE-MPC case study}
We apply Algorithm \ref{alg_PC_IPM} to a nonlinear PDE-MPC problem from \cite{wu2024time}. The considered PDE plant is the nonlinear Korteweg-de Vries (KdV) equation as follows,
\begin{equation}\label{eqn_KdV}
\frac{\partial y(t, x)}{\partial t}+y(t, x) \frac{\partial y(t, x)}{\partial x}+\frac{\partial^3 y(t, x)}{\partial x^3}=u(t,x)
\end{equation}
where $x\in[-\pi,\pi]$ is the spatial variable. We consider the control input $u$ to be $u(t,x)=\sum_{i=1}^4u_i(t)v_i(x)$, in which the four coefficients $\{u_i(t)\}$ are subject to the constraint $[-1,1]$, and $v_i(x)$ are predetermined spatial profiles given as $v_i(x)=e^{-25(x-m_i)^2}$, with $m_1=-\pi/2$, $m_2=-\pi/6$, $m_3=\pi/6$, and  $m_4=\pi/2$. The control objective is to adjust ${u_i(t)}$ so that the spatial profile $y(t,x)$ tracks the given reference signal. We follow the same problem settings from \cite{wu2024time}: 1) generate data from the KdV equation \eqref{eqn_KdV}; 2) use the data-driven Koopman operator to identify a high-dimensional linear state-space model; 3) formulate the condensed MPC problem into a Box-QP \eqref{eqn_Box_QP}. The prediction horizon in \cite{wu2024time} is chosen as $10$, so the dimension of the resulting Box-QP \eqref{eqn_Box_QP} is $n=4\times 10=40$, and we adopt the stopping criteria $\epsilon=1\times10^{-6}$. Tab. \ref{tab2} shows that, compared to the algorithm proposed in Ref. \cite{wu2025direct}, Algorithm \ref{alg_PC_IPM} is $5\times$ faster, approximately half the $202/29$. Because the cost per iteration of Algorithm \ref{alg_PC_IPM} is twice that of the one in Ref. \cite{wu2025direct} (solving two different linear equations). Fig.\  \ref{fig_kdv} shows that Algorithm \ref{alg_PC_IPM} provides quick and accurate tracking of the spatial profile $y(t,x)$ to the given reference profile and zero-violations in the control input.
\begin{table}[!htbp]
\caption{Computation behvior of Algorithm \ref{alg_PC_IPM} and the method in Ref. \cite{wu2025direct} in the PDE-MPC example}
\centering
\begin{tabular}{crc}
\toprule
Methods  & Number of iterations &  Execution time [s]\footnotemark \\ \midrule
Ref \cite{wu2025direct} &  202  &  $3.4\times 10^{-3}$\\ \hline
\multirow{2}{*}{Algorithm \ref{alg_PC_IPM}} & Average: $\mathbf{29.2758\pm 2.3843}$ & \multirow{2}{*}{$\mathbf{6.6024\times 10^{-4}}$} \\ 
                         & Worst-case: 271 &  \\  
\bottomrule
\end{tabular}
\label{tab2}
\end{table}
\footnotetext{The execution time results were based on MATLAB implementations running on a Mac mini with an Apple M4 Chip (10-core CPU and 16 GB RAM). Further speedup can be achieved via C-code implementation.} 
\section{Conclusion}
This letter presents a significant improvement over our previous work \cite{wu2025direct} with a new predictor–corrector IPM algorithm, preserving the data-independent and simple-calculated $O(\sqrt{n})$-iteration-complexity, while achieving a $5\times$ speedup.

\textbf{Limitation:} Algorithm \ref{alg_PC_IPM} and the heuristic Mehrotra’s predictor–corrector IPM behave similarly in practical iteration complexity, but Algorithm \ref{alg_PC_IPM} is slower because it solves two distinct linear systems at each iteration, whereas the latter does not. Future work will address this and then perform more numerical comparisons with other state-of-the-art solvers.

\bibliographystyle{IEEEtran}
\bibliography{ref} 
\end{document}